\newtheorem{theorem}{Theorem}
\numberwithin{theorem}{section}
\newtheorem{lemma}[theorem]{Lemma}
\newtheorem{proposition}[theorem]{Proposition}
\newtheorem{conjecture}[theorem]{Conjecture}
\theoremstyle{definition}
\newtheorem{definition}[theorem]{Definition}
\theoremstyle{remark}
\newtheorem{remark}[theorem]{Remark}
\Crefname{conjecture}{Conjecture}{Conjectures}
\title[On combinatorial invariance of parabolic KL polynomials]{On combinatorial invariance of parabolic Kazhdan--Lusztig polynomials}
\author{Grant T. Barkley}
\author{Christian Gaetz}
\thanks{GTB was supported by NSF grant DMS-2152991.}
\address{Department of Mathematics, Harvard University, Cambridge, MA.}
\email{{\href{mailto:gbarkley@math.harvard.edu}{gbarkley@math.harvard.edu}}}
\address{Department of Mathematics, University of California, Berkeley, CA.}
\email{{\href{mailto:gaetz@berkeley.edu}{{\tt gaetz@berkeley.edu}}}}
\date{\today}
\begin{document}
\begin{abstract}
We show that the \emph{Combinatorial Invariance Conjecture} for Kazhdan--Lusztig polynomials due to Lusztig and to Dyer, its parabolic analog due to Marietti, and a refined parabolic version that we introduce, are equivalent. We use this to give a new proof of Marietti's conjecture in the case of lower Bruhat intervals and to prove several new cases of the parabolic conjectures.
\end{abstract}
\keywords{}
\subjclass{05E14, 14M15}
\maketitle
\section{Introduction}

The \emph{Kazhdan--Lusztig polynomial} $P_{u,v}(q) \in \mathbb{Z}[q]$, indexed by a pair $u,v$ of elements in a Coxeter group $W$, is a fundamental object in geometric representation theory \cite{Kazhdan-Lusztig-1}. These polynomials determine the transition from the standard basis of the Hecke algebra to the Kazhdan--Lusztig basis. When $W$ is the Weyl group of a complex semisimple Lie group $G$, the $P_{u,v}$ give both the Poincar\'{e} polynomials of the local intersection cohomology of Schubert varieties in $G/B$ and relate the characters of Verma modules and simple modules of $G$ \cite{Beilinson-Bernstein, Brylinski-Kashiwara}.

The polynomial $P_{u,v}$ is nonzero if and only if $u \leq v$ in \emph{Bruhat order}; the same is true of the \emph{Kazhdan--Lusztig $R$-polynomial} $R_{u,v}$, also introduced in \cite{Kazhdan-Lusztig-1}. The \emph{Combinatorial Invariance Conjecture} (\Cref{conj:cic}) asserts that, remarkably, both families of Kazhdan--Lusztig polynomials are completely determined by the combinatorics of Bruhat order. 

\begin{conjecture}[Lusztig c.~1983; Dyer \cite{Dyer1987}]
\label{conj:cic}
Suppose that for $u_1, v_1 \in W_1$ and $u_2,v_2 \in W_2$ the Bruhat intervals $[u_1,v_1]$ and $[u_2,v_2]$ are isomorphic as posets, then:
\begin{itemize}
    \item[(a)] $R_{u_1,v_1}=R_{u_2,v_2}$, and 
    \item[(b)] $P_{u_1,v_1}=P_{u_2,v_2}$.
\end{itemize}
\end{conjecture}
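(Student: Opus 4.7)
My plan is to attack both parts of \Cref{conj:cic} via the theory of \emph{special matchings} on Bruhat intervals, introduced by Brenti, which is designed precisely to extract the $R$-polynomial recursion from purely order-theoretic data. The standard recursion reads $R_{u,v}=R_{us,vs}$ when $us<u$ and $vs<v$, and $R_{u,v}=(q-1)R_{u,vs}+qR_{us,vs}$ when $u<us$ and $vs<v$. A special matching on a lower interval $[e,v]$ is an order-theoretic involution that mimics right multiplication by a simple descent of $v$, and Brenti's theorem states that any such matching $M$ reproduces the $R$-recursion. Crucially, special matchings are defined intrinsically on the poset, so an isomorphism $\phi\colon[u_1,v_1]\to[u_2,v_2]$ transports them between the two sides.

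First I would handle part (a). Given $\phi$, I induct on $\ell(v_1)-\ell(u_1)$, choose a suitable matching $M$ on $[u_1,v_1]$ (either the restriction of a special matching on a lower interval, or the broader notion of a \emph{zircon matching} that makes sense intrinsically on any interval), push it forward via $\phi$ to obtain $M' = \phi\circ M\circ\phi^{-1}$, verify that $M'$ satisfies the same combinatorial axioms, and apply Brenti's recursion on both sides. The two recursions express $R_{u_i,v_i}$ in terms of $R$-polynomials of intervals which are themselves paired by $\phi$, closing the induction. The delicate point is ensuring that the class of matchings one uses is intrinsic to the interval \emph{and} rich enough that Brenti's recursion applies; Brenti's cleanest statements live on lower intervals, and extending them to arbitrary $[u,v]$ is exactly where the combinatorial content concentrates.

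For part (b), I would follow the hypercube-decomposition paradigm suggested by the recent DeepMind--Oxford collaboration: pick a coatom $z$ of $v$ inside $[u,v]$, locate the maximal Boolean sub-interval of $[u,v]$ containing $z$, and express $P_{u,v}$ as a prescribed polynomial in the $R$-polynomials and the $P$-polynomials of strictly smaller intervals determined by the decomposition. Every input and every output of this formula is poset-theoretic, so combinatorial invariance of $P$ would follow by induction from invariance of $R$, proved in part (a), together with the inductive hypothesis applied to the strictly smaller $P_{u',v'}$ appearing in the recursion.

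The main obstacle, and the reason \Cref{conj:cic} has remained open for four decades, is verifying the two conjectural inputs just invoked. On the $R$ side, one must show that \emph{every} Bruhat interval, not only every lower interval, admits a matching whose recursion closes within the class of intervals recoverable from the poset of $[u,v]$ alone. On the $P$ side, the hypercube formula is not yet proven in general, and one must additionally show that it is independent of the choice of coatom $z$. My honest expectation is that a complete resolution will need a new intrinsic poset-theoretic invariant that simultaneously controls both recursions, or a fundamentally different input such as a poset-theoretic construction of Soergel bimodules; the plan above should therefore be read as a roadmap that reduces \Cref{conj:cic} to two concrete combinatorial statements, which can then be established unconditionally in particular families as the rest of this paper does in the parabolic setting.
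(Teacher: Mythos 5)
What you have written is a roadmap, not a proof, and you acknowledge as much in your closing paragraph. The statement in question, \Cref{conj:cic}, is an open conjecture (Lusztig, Dyer); the paper does not prove it and does not claim to --- it only proves that it is \emph{equivalent} to the parabolic conjectures (\Cref{thm:conj-are-equivalent}) and establishes special cases (lower intervals, short edge intervals, and coelementary intervals for the $R$-polynomials). Your argument bottoms out in two inputs that are themselves unproven and are essentially as hard as the conjecture. On the $R$ side, the fact that special matchings calculate the $R$-polynomial recursion is known for \emph{lower} intervals (Brenti, Brenti--Caselli--Marietti, which is exactly what the paper cites for \Cref{thm:lower-intervals}); for a general interval $[u,v]$ it is not known that there exists any poset-intrinsic matching whose recursion both computes $R_{u,v}$ and closes within data recoverable from the isomorphism type of $[u,v]$. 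Transporting a matching by $\varphi\circ M\circ\varphi^{-1}$ is indeed automatic, but that was never the obstruction; the obstruction is precisely the ``delicate point'' you flag and then leave open. On the $P$ side, the hypercube decomposition formula is conjectural (and formulated only for symmetric groups), and its independence of the choice of coatom is also open, so part (b) of your plan adds a second unproven dependency rather than removing one.

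A smaller structural remark: even granting your part (a) in the strong form you would need (invariance of $R_{y,y'}$ for all pairs in the interval, i.e.\ what the paper calls full invariance), part (b) follows immediately from the defining recursion in \Cref{def:parabolic-P} with $J=\emptyset$, as the paper notes in its Remark after \Cref{conj:cic} and in \Cref{prop:fully-R-implies-fully-P}. So the detour through hypercube decompositions is unnecessary as well as conjectural. In short: no gap in honesty, but a genuine gap in mathematics --- the two ``concrete combinatorial statements'' you reduce to are open, so the conjecture remains unproven by this proposal, consistent with its status in the paper.
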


\Cref{conj:cic} has received considerable study, with many special cases having been established. In particular, it has been proven for \emph{lower intervals} \cite{Lower-intervals-general-type} and \emph{short edge intervals} \cite{Brenti-combinatorial}. It has also been verified for Coxeter groups of type $\widetilde{A}_2$ \cite{affine-A2} and for small rank finite Coxeter groups \cite{Incitti2007}. Work on this conjecture in the case of type $A$ has been especially active in recent years thanks to the conjectural approach of \cite{Blundell}.

\begin{remark}
    It is immediate from the definitions (see \Cref{def:parabolic-R,def:parabolic-P}) that the truth of \Cref{conj:cic}(a) for all pairs of intervals is equivalent to the truth of \Cref{conj:cic}(b) for all pairs of intervals.
\end{remark}

For $J$ a subset of the set $S$ of simple generators of $W$ and $x \in \{-1,q\}$, the \emph{parabolic Kazhdan--Lusztig polynomials} $P_{u,v}^{J,x}(q)$ generalize the $P_{u,v}$ and, in the case $x=-1$, give the Poincar\'{e} polynomials of the local intersection cohomology of Schubert varieties in $G/P_J$, where $P_J$ is the corresponding parabolic subgroup \cite{Deodhar-parabolic}. 
Here $u$ and $v$ lie in the parabolic quotient $W^J$. The \emph{parabolic $R$-polynomials} $R_{u,v}^{J,x}(q)$ likewise generalize the $R_{u,v}$. The $P_{u,v}$ and $R_{u,v}$ are the special case $J=\emptyset$.

Let $[u,v]^J \coloneqq [u,v] \cap W^J$ denote the set of elements from $[u,v]$ lying in $W^J$. Brenti, in the case of \emph{Hermitian symmetric pairs} \cite[Corollary 4.8]{Brenti-hermitian}, proved that parabolic Kazhdan--Lusztig polynomials are equal when $[u_1,v_1]^J \cong [u_2,v_2]^J$ (see also \cite{Boe,Lascoux-Schutzenberger}). But examples from \cite{Brenti-Mongelli-Sentinelli, Marietti-sm-for-parabolic} show that the na\"\i ve extensions of \Cref{conj:cic} that this suggests, where one only requires that $[u_1,v_1]^{J_1} \cong [u_2,v_2]^{J_2}$ or that $[u_1,v_1] \cong [u_2,v_2]$, are false in general. Thus any extension to the parabolic setting must include information about the isomorphism type of the Bruhat intervals as well as information about their intersections with $W^J$. Marietti proposed such an extension and proved it for lower intervals.

\begin{conjecture}[Marietti \cite{Marietti-sm-for-parabolic, Marietti-parabolic-cic}]
\label{conj:parabolic-cic}
Suppose that for $u_1, v_1 \in W_1^{J_1}$ and $u_2,v_2 \in W_2^{J_2}$ there is a poset isomorphism $\varphi: [u_1,v_1] \to [u_2,v_2]$ restricting to an isomorphism $[u_1,v_1]^{J_1} \to [u_2,v_2]^{J_2}$, then:
\begin{itemize}
    \item[(a)] $R^{J_1,x}_{u_1,v_1}=R^{J_2,x}_{u_2,v_2}$ for $x \in \{-1,q\}$, and
    \item[(b)] $P^{J_1,x}_{u_1,v_1}=P^{J_2,x}_{u_2,v_2}$ for $x \in \{-1,q\}$.
\end{itemize} 
\end{conjecture}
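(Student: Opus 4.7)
My plan is to reduce Marietti's conjecture to the non-parabolic \Cref{conj:cic}. Assuming \Cref{conj:cic} for all pairs of isomorphic subintervals of $[u_1, v_1]$ and $[u_2, v_2]$, I will deduce \Cref{conj:parabolic-cic} for $(u_i, v_i, J_i)$. The main tool is an inclusion--exclusion identity, essentially due to Deodhar, that expresses each parabolic polynomial as an explicit signed sum of non-parabolic ones indexed by the coset $uW_J \cap [u,v]$; schematically,
\[
R^{J, x}_{u, v}(q) \;=\; \sum_{w \in W_J,\, uw \leq v} c_x\bigl(\ell(w), q\bigr) \, R_{uw, v}(q),
\]
with an analogous expression for $P^{J, x}_{u, v}$. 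It will suffice to show that the data going into each term is preserved by the hypothesis of \Cref{conj:parabolic-cic}.

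The key structural observation is that the parabolic projection $\pi_J : [u, v] \to [u, v]^J$, $z \mapsto z^J$, admits a purely poset-theoretic description: $\pi_J(z)$ is the unique maximal element of $\{y \in [u, v]^J : y \leq z\}$. This uses the classical fact that for $u \in W^J$, $u \leq z$ implies $u \leq z^J$. Consequently, $\varphi$ automatically commutes with the two projections, and restricts to a bijection between the fibers $u_1 W_{J_1} \cap [u_1, v_1]$ and $u_2 W_{J_2} \cap [u_2, v_2]$. Under this bijection, $\ell(w) = \ell(u_i w) - \ell(u_i)$ coincides with the rank in $[u_i, v_i]$ and is therefore preserved, so the scalar coefficients $c_x$ match term by term. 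Each matched pair $R_{u_1 w, v_1}$ and $R_{\varphi(u_1 w), v_2}$ then concerns subintervals $[u_1 w, v_1]$ and $[\varphi(u_1 w), v_2]$ that are poset-isomorphic via the restriction of $\varphi$; these are equal by the assumed non-parabolic CIC. The argument for $P$ is entirely parallel.

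\textbf{Main obstacle.} Since \Cref{conj:cic} is itself open, the above is strictly a reduction rather than an unconditional proof; but it is substantive, since every family on which non-parabolic CIC is known (lower intervals, short edge intervals, small-rank groups) transfers immediately to \Cref{conj:parabolic-cic}. I expect the genuinely delicate point will be verifying that the schematic Deodhar identity can be upgraded to a true polynomial identity in which every ingredient is read off purely from the interval data, especially for the $P^{J,x}$ statement where the coefficient structure is less transparent. This is presumably the role of the ``refined parabolic version'' mentioned in the abstract: by tracking slightly more structure (likely an equivalence class of reflection orders, or a labeling of covers in the Bruhat graph), one obtains a formulation in which the needed polynomial identities become manifest and the equivalence with \Cref{conj:cic} is symmetrical.
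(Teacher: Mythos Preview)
Your reduction is essentially the paper's approach for $R^{J,x}$ and for $P^{J,q}$: both use Deodhar's identity (\Cref{thm:deodhar-parabolic-as-sum}) and a poset-theoretic description of $uW_J\cap[u,v]$. Your description via the projection $\pi_J(z)=\max\{y\in[u,v]^J:y\le z\}$ is correct and, restricted to the fiber over $u$, is exactly the paper's \Cref{lem:set-is-combinatorial}, which phrases it as $\{y\in[u,v]:y\not\ge a\text{ for all }a\in A^J_{u,v}\}$. The paper's phrasing needs only the atoms of $[u,v]^J$, which is what lets it prove the sharper \Cref{conj:parabolic-cic-atoms}; your projection argument uses the full hypothesis of \Cref{conj:parabolic-cic}, which is fine for that statement.

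There is, however, a genuine gap in the $P^{J,-1}$ case. The claim that ``the argument for $P$ is entirely parallel'' fails because Deodhar's alternating-sum formula \Cref{thm:deodhar-parabolic-as-sum}(b) holds only for $x=q$; there is no analogous expression for $P^{J,-1}_{u,v}$ as a signed sum $\sum_{w\in W_J} c(\ell(w))\,P_{uw,v}$ over the coset. (Indeed, the Remark after \Cref{conj:parabolic-cic-atoms} gives an explicit example where two intervals with matching coset data have different $P^{J,-1}$.) The paper handles $P^{J,-1}$ by a different route in the proof of \Cref{thm:invariant-I}(c): one first observes that the Deodhar/coset argument actually yields $R^{J_1,x}_{u_1',v_1'}=R^{J_2,x}_{\varphi(u_1'),\varphi(v_1')}$ for \emph{every} pair $u_1'\le v_1'$ in $[u_1,v_1]^{J_1}$ (this is where the full hypothesis $\varphi([u_1,v_1]^{J_1})=[u_2,v_2]^{J_2}$ is used, not just the fiber over $u_1$), and then deduces equality of $P^{J,x}$ for both $x$ by induction on $\ell(v)-\ell(u)$ directly from the defining recursion \Cref{def:parabolic-P}(iv) and the degree bound (iii).

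Finally, your guess about the ``refined parabolic version'' is off: it is \Cref{conj:parabolic-cic-atoms}, which \emph{weakens} the hypothesis to a bijection on atoms rather than adding extra structure.
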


It was observed by Marietti \cite{Marietti-parabolic-cic} that the truth of \Cref{conj:parabolic-cic}(a) for all pairs of intervals is equivalent to the truth of \Cref{conj:parabolic-cic}(b) for all pairs of intervals.

In \Cref{conj:parabolic-cic-atoms} below, we propose a refined conjecture in which far less information about $[u,v]^J$ is required in order to determine $R^{J,x}_{u,v}$. Let 
\[
A^J_{u,v} \coloneqq \{a \in W^J \mid u \lessdot a \leq v\}
\]
denote the set of atoms of $[u,v]$ lying in $W^J$.

\begin{conjecture}
\label{conj:parabolic-cic-atoms}
Suppose that for $u_1, v_1 \in W_1^{J_1}$ and $u_2,v_2 \in W_2^{J_2}$ there is a poset isomorphism $\varphi: [u_1,v_1] \to [u_2,v_2]$ restricting to a bijection $A^{J_1}_{u_1,v_1} \to A^{J_2}_{u_2,v_2}$, then:
\begin{itemize}
    \item[(a)] $R^{J_1,x}_{u_1,v_1}=R^{J_2,x}_{u_2,v_2}$ for $x \in \{-1,q\}$, and 
    \item[(b)] $P^{J_1,q}_{u_1,v_1}=P^{J_2,q}_{u_2,v_2}$.
\end{itemize} 
\end{conjecture}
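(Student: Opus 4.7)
The plan is to attack Conjecture 1.3 by showing it is equivalent to the ordinary Combinatorial Invariance Conjecture \Cref{conj:cic}, so that the wealth of known cases of \Cref{conj:cic} transfer to the refined parabolic setting. One direction of this equivalence is immediate: setting $J_1 = J_2 = \emptyset$ in \Cref{conj:parabolic-cic-atoms} recovers \Cref{conj:cic}, since $W^\emptyset = W$, the atom set $A^\emptyset_{u,v}$ is the full atom set of $[u,v]$, and every poset isomorphism of Bruhat intervals automatically restricts to a bijection on atoms. So the real content is to derive \Cref{conj:parabolic-cic-atoms} from \Cref{conj:cic}.

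The key intermediate step I would aim for is a \emph{recovery lemma}: for $u,v \in W^J$, the subset $[u,v]^J \subseteq [u,v]$ is determined by the Bruhat interval $[u,v]$, viewed as an abstract poset, together with the atom set $A^J_{u,v} \subseteq [u,v]$. I would try to prove this by induction on $\ell(w) - \ell(u)$ for $w \in [u,v]$, showing that $W^J$-membership of $w$ is forced by the combinatorics of the lower shadow of $w$ in $[u,v]$ and the inductively determined $W^J$-membership data at smaller rank, with the atoms $A^J_{u,v}$ supplying the base case. Granting the recovery lemma, the hypothesis of \Cref{conj:parabolic-cic-atoms} immediately upgrades to that of Marietti's \Cref{conj:parabolic-cic}, so that all three conjectures share the same hypothesis and it suffices to deduce the latter from \Cref{conj:cic}.

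For the $R$-polynomial statement (a), I would rewrite Deodhar's recursion for $R^{J,x}_{u,v}$ so that each step refers only to (i) ordinary $R$-polynomials on subintervals of $[u,v]$, and (ii) $W^J$-membership data on $[u,v]$. Part (i) is controlled across the isomorphism $\varphi$ by \Cref{conj:cic}(a), and part (ii) by the compatibility of $\varphi$ with $W^J$-membership (via the recovery lemma). For statement (b) on $P^{J,q}_{u,v}$, I would then run the standard defining recursion relating parabolic $P$-polynomials to parabolic $R$-polynomials inside the interval; since both inputs are combinatorially invariant, so is the output.

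The main obstacle I expect is the recovery lemma. Membership in $W^J$ is defined via the right descent set, which is not a priori reconstructible from the poset $[u,v]$ even when augmented by atom data — the local descents of an interior element $w$ may interact subtly with the embedding $w \in W$. If the recovery lemma fails in full generality, a fallback plan is to bypass $[u,v]^J$ entirely and develop a recursion for $R^{J,x}_{u,v}$ in which the pivot at each step is an element of $A^J_{u,v}$ rather than a simple reflection; this would require carefully choosing a "good" atom whose removal decreases the interval in a controlled way and whose existence can be established directly from the poset structure.
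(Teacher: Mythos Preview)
Your central ``recovery lemma'' --- that $[u,v]^J$ is determined by the abstract poset $[u,v]$ together with the atom set $A^J_{u,v}$ --- is false, and the counterexample is already in the paper as the Remark following \Cref{conj:parabolic-cic-atoms}. In $S_4$ the Boolean intervals $[e,s_1s_2s_3]$ and $[e,s_2s_1s_3]$ are isomorphic, and with $J_1=\{s_1\}$, $J_2=\{s_2\}$ each has exactly two of its three atoms in $W^{J_i}$; but $|[e,s_1s_2s_3]^{\{s_1\}}|=6$ while $|[e,s_2s_1s_3]^{\{s_2\}}|=7$. So no isomorphism matching the atom sets can match the full parabolic subposets, and indeed this is why $P^{J,-1}$ is excluded from \Cref{conj:parabolic-cic-atoms}(b). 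Your plan to upgrade the atom hypothesis to Marietti's hypothesis therefore cannot work, and the downstream argument for (b) via the defining recursion in \Cref{def:parabolic-P}(iv) also breaks, since that recursion sums over $[u,v]^J$.

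The paper's route avoids $[u,v]^J$ entirely for parts (a) and (b). The key point is that Deodhar's closed formulas in \Cref{thm:deodhar-parabolic-as-sum} express $R^{J,x}_{u,v}$ and $P^{J,q}_{u,v}$ as alternating sums of \emph{ordinary} $R$- and $P$-polynomials indexed not by $[u,v]^J$ but by the coset slice $uW_J\cap[e,v]$. \Cref{lem:set-is-combinatorial} shows that this coset slice equals $\{y\in[u,v]\mid y\not\geq a\text{ for all }a\in A^J_{u,v}\}$, which is visibly preserved by any poset isomorphism matching the atom sets. Assuming \Cref{conj:cic}, the individual summands $R_{y,v}$ and $P_{y,v}$ are preserved as well, and the conclusion follows. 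Your fallback idea of pivoting on atoms is in spirit closer to this, but the actual mechanism is not a new recursion: it is the observation that the \emph{existing} Deodhar sum is already indexed by a set that the atoms determine.
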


\begin{remark}
The analog of \Cref{conj:parabolic-cic-atoms} does not hold for the polynomials $\{P^{J,-1}_{u,v}\}$. For example, letting $W_1=W_2$ be the symmetric group $S_4$, $u_1=u_2=e$, $v_1=s_1s_2s_3$, and $v_2=s_2s_1s_3$, both $[u_1,v_1]$ and $[u_2,v_2]$ are isomorphic to the Boolean lattice $B_3$. Letting $J_1=\{s_1\}$ and $J_2=\{s_2\}$, we have $|A^{J_1}_{u_1,v_1}|=|A^{J_2}_{u_2,v_2}|=2$. Thus there is an isomorphism $\varphi$ satisfying the hypotheses of the conjecture. However $P^{J_1,-1}_{u_1,v_1}=1 \neq 1+q = P^{J_2,-1}_{u_2,v_2}$.
\end{remark} 

\Cref{conj:parabolic-cic-atoms}(a) implies \Cref{conj:parabolic-cic} since any poset isomorphism $[u_1,v_1]^{J_1}\to [u_2,v_2]^{J_2}$ in particular restricts to a bijection $A^{J_1}_{u_1,v_1}\to A^{J_2}_{u_2,v_2}$. \Cref{conj:parabolic-cic} in turn implies \Cref{conj:cic} by taking $J_1=J_2=\emptyset$. In our first main result, we show that the three conjectures are in fact equivalent. The equivalence of \Cref{conj:cic,conj:parabolic-cic} is already new.

\begin{theorem}
\label{thm:conj-are-equivalent}
\Cref{conj:cic,conj:parabolic-cic,conj:parabolic-cic-atoms} are equivalent.
\end{theorem}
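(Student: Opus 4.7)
Since the chain \Cref{conj:parabolic-cic-atoms}$\Rightarrow$\Cref{conj:parabolic-cic}$\Rightarrow$\Cref{conj:cic} of implications is established in the preceding discussion, my plan is to close the loop by proving \Cref{conj:cic}$\Rightarrow$\Cref{conj:parabolic-cic-atoms} directly. The strategy is to exhibit a formula computing $R^{J,x}_{u,v}(q)$ from (i) the ordinary $R$-polynomials on $[u,v]$ (which are Bruhat-invariant by \Cref{conj:cic}(a)) and (ii) the parabolic atom sets $A^J_{u,w}$ for $w$ ranging over a recursively defined family of elements within $[u,v]$, each such set being preserved by the hypothesized isomorphism $\varphi$ via an induction on $\ell(v)-\ell(u)$. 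The conclusion of \Cref{conj:parabolic-cic-atoms} then follows because both kinds of data are preserved by $\varphi$ under its hypothesis.

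For $x=q$, I would invoke the classical identity $R^{J,q}_{u,v}(q)=R_{u,v}(q)$ for $u,v\in W^J$, which arises from the Hecke-module surjection $\cH\twoheadrightarrow\mathcal{M}^{J,q}$ sending the Kazhdan--Lusztig basis to the parabolic Kazhdan--Lusztig basis. This reduces the $x=q$ assertion of \Cref{conj:parabolic-cic-atoms}(a) to \Cref{conj:cic}(a) directly, without invoking the atom hypothesis at all. For $x=-1$, I would derive a recursive formula by projecting $C_v=\sum_{w\leq v}R_{w,v}(q)T_w$ to $\mathcal{M}^{J,-1}$, exploiting the fact that $T_s$ acts on the parabolic basis element indexed by $w\in W^J$ as $-1$ whenever simple-reflection multiplication takes $w$ out of $W^J$. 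Each such ``parabolic crossing'' contributes a correction term controlled by atoms of $[u,v]$ lying outside $W^J$; inducting on $\ell(v)-\ell(u)$ would yield an expression for $R^{J,-1}_{u,v}$ purely in terms of ordinary $R$-polynomials on nested sub-intervals and their parabolic atom sets. Part (b) of \Cref{conj:parabolic-cic-atoms} then follows from part (a) by the standard relation between $R$- and $P$-polynomials, as in Marietti's treatment of \Cref{conj:parabolic-cic}.

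The main obstacle is the $x=-1$ case: verifying that the recursive formula genuinely depends only on atom data of successive sub-intervals, rather than on the full $W^J$-intersection of each. Since each recursive step introduces sub-intervals $[a,v]$ for various atoms $a\in A_{u,v}$, one must show that the relevant parabolic contribution at each stage can be reduced to atom data after a further application of the inductive hypothesis. Establishing this -- namely, that Hecke-module contributions from non-$W^J$ elements at depth greater than one factor through atom data alone -- would require a careful analysis of Deodhar's parabolic recursions and constitutes the technical heart of the argument.
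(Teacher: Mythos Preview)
Your proposal contains a substantive error and a genuine gap.

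\medskip
\textbf{The $x=q$ case.} The identity $R^{J,q}_{u,v}=R_{u,v}$ that you invoke is false. For instance, in $S_3$ with $J=\{s_1\}$ one has $R^{J,q}_{e,\,s_1s_2}=1-q$ while $R_{e,\,s_1s_2}=(q-1)^2$; this is immediate from Deodhar's formula $R^{J,q}_{u,v}=\sum_{w\in W_J}(-q)^{\ell(w)}R_{uw,v}$. So the $x=q$ part of your argument does not go through.

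\medskip
\textbf{The reduction of (b) to (a).} You propose deducing $P^{J,q}_{u_1,v_1}=P^{J,q}_{u_2,v_2}$ from equality of parabolic $R$-polynomials via the defining relation between the two families. But that relation (the analogue of $q^{\ell(v)-\ell(u)}P(q^{-1})=\sum R\cdot P$) is a sum over $[u,v]^J$, so using it inductively requires control of $R^{J,x}_{u,\sigma}$ and $P^{J,x}_{\sigma,v}$ for \emph{all} $\sigma\in[u,v]^J$. Under the atom-only hypothesis of \Cref{conj:parabolic-cic-atoms} you do not know $[u,v]^J$, only its atoms, so this reduction is not available. (This is precisely why the paper treats $P^{J,q}$ directly via Deodhar's alternating-sum formula rather than via the $R$--$P$ relation.)

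\medskip
\textbf{Comparison with the paper.} The paper avoids both difficulties with a single observation that handles $x\in\{-1,q\}$ uniformly. By Deodhar, $R^{J,x}_{u,v}=\sum_{y\in uW_J\cap[e,v]}(-x)^{\ell(y)-\ell(u)}R_{y,v}$ and $P^{J,q}_{u,v}=\sum_{y\in uW_J\cap[e,v]}(-1)^{\ell(y)-\ell(u)}P_{y,v}$. The key lemma is that
\[
uW_J\cap[e,v]=\{y\in[u,v]\mid y\not\geq a\ \text{for all}\ a\in A^J_{u,v}\},
\]
so the summation domain is determined by the poset $[u,v]$ together with its parabolic atoms. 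Under \Cref{conj:cic} the summands $R_{y,v}$ and $P_{y,v}$ are Bruhat-invariant, and the atom hypothesis on $\varphi$ transports the summation domain. No recursion on sub-intervals is needed, and no separate treatment of $x=q$ versus $x=-1$ is required. Your proposed recursive Hecke-module analysis for $x=-1$, even if it could be made to work, is considerably more intricate than this, and you yourself flag its key step as unresolved.
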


In the case of \emph{lower intervals}, when $u_1$ and $u_2$ are the identity elements of $W_1$ and $W_2$ respectively, \Cref{conj:cic} was proven by Brenti--Caselli--Marietti \cite{Lower-intervals-general-type}. Later, Marietti proved \Cref{conj:parabolic-cic} for lower intervals \cite{Marietti-sm-for-parabolic,Marietti-parabolic-cic}. We prove \Cref{conj:parabolic-cic-atoms} for lower intervals and give a new short proof, relying on Brenti--Caselli--Marietti's results, of \Cref{conj:parabolic-cic} in this case. 

\begin{theorem}
\label{thm:lower-intervals}
\Cref{conj:cic,conj:parabolic-cic,conj:parabolic-cic-atoms} hold in the case $u_1=e_1$ and $u_2=e_2$ are the identity elements of $W_1$ and $W_2$.
\end{theorem}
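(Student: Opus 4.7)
The plan is to prove \Cref{conj:parabolic-cic-atoms} for lower intervals; since any isomorphism meeting the hypothesis of \Cref{conj:parabolic-cic} restricts to a bijection of $J$-atoms, and since taking $J_1=J_2=\emptyset$ makes the $J$-atom condition of \Cref{conj:parabolic-cic-atoms} automatic and reduces its conclusion to that of \Cref{conj:cic}, the other two statements in \Cref{thm:lower-intervals} will then follow at once. In particular, this approach will yield the promised new short proof of Marietti's theorem (\Cref{conj:parabolic-cic} for lower intervals) as an immediate corollary. Note that the hypothesis simplifies substantially here: the atoms of $[e_i,v_i]$ are precisely the simple reflections $s\in S_i$ with $s\leq v_i$, and $A^{J_i}_{e_i,v_i}$ consists of those lying outside $J_i$, so the data to be preserved by $\varphi$ is just which atoms are, or are not, in $J_i$.

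My approach is to combine two ingredients. First, the theorem of Brenti--Caselli--Marietti establishes \Cref{conj:cic} for lower intervals; given a poset isomorphism $\varphi:[e_1,v_1]\to[e_2,v_2]$, restricting $\varphi$ to the lower interval $[e_1,v_1']$ for each $v_1'\leq v_1$ immediately yields $R_{e_1,v_1'}=R_{e_2,\varphi(v_1')}$. Second, I would use the reduction from \Cref{thm:conj-are-equivalent} expressing $R^{J,x}_{u,v}$ in terms of ordinary $R$-polynomial data together with the $J$-atom combinatorics; the hope is that, specialized to $u=e$, this reduction only invokes ordinary $R$-polynomials of lower subintervals of $[e,v]$. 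Combining both ingredients then gives combinatorial invariance of $R^{J,x}_{e,v}$, which is \Cref{conj:parabolic-cic-atoms}(a), from which part (b) will follow by the formula expressing $P$ in terms of $R$.

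The main obstacle is verifying this ``locality'' of the equivalence reduction: namely, that the machinery of \Cref{thm:conj-are-equivalent}, when applied to a lower interval, never needs to appeal to $R_{u,v'}$ for some $u\neq e$, since such an invocation would exceed what Brenti--Caselli--Marietti provides. Should such invocations appear, I would fall back on a direct induction on $\ell(v)$ using the standard parabolic $R$-polynomial recursion: on a right descent $s\in D_R(v)$, most branches express $R^{J,x}_{e,v}$ in terms of parabolic $R$-polynomials of strictly smaller lower intervals $[e,v']$, for which the inductive hypothesis applies; the delicate branch is when $s\notin J$ and $vs\in W^J$, introducing a term $R^{J,x}_{s,vs}$ on an interval that is not lower. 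Rewriting this exceptional term in terms of lower-interval data---by further applying the recursion to descend back into the lower-interval regime, using the $J$-atom hypothesis to track which simple reflections are involved---is the technical heart of the argument.
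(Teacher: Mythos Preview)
Your overall plan---Brenti--Caselli--Marietti plus the reduction machinery---is exactly the paper's, but you are using too weak a version of the BCM input, and this is what creates the ``obstacle'' you worry about. The Deodhar formula reads $R^{J,x}_{e,v}=\sum_{w\in W_J}(-x)^{\ell(w)}R_{w,v}$, so the reduction \emph{does} invoke $R_{w,v}$ with $w\neq e$; your ``hope'' that only polynomials $R_{e,v'}$ appear is false. The fix is not your fallback induction but a stronger reading of BCM: their result (together with Dyer's path formula) shows that a poset isomorphism $\varphi:[e_1,v_1]\to[e_2,v_2]$ is automatically a Bruhat-graph isomorphism, and hence preserves \emph{every} $R_{y,y'}$ for $y,y'\in[e_1,v_1]$, not merely $R_{e_1,v_1'}$. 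In the paper's language, the collection of lower intervals is \emph{fully invariant}, in particular upper $R$-invariant. With this in hand, the sum $\sum_{w}(-x)^{\ell(w)}R_{w,v}$ is preserved term-by-term once you identify the index set $\{w\in W_J:w\le v\}$ combinatorially via \Cref{lem:set-is-combinatorial}, and your obstacle evaporates---no recursion on descents is needed.

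There is a second gap in your deduction of part~(b). Under the atom-only hypothesis of \Cref{conj:parabolic-cic-atoms} you do not know that $\varphi$ preserves $[e,v]^J$, so you cannot run the defining recursion \Cref{def:parabolic-P}(iv), which sums over $\sigma\in[e,v]^J$ and also requires the non-lower polynomials $P^{J,x}_{\sigma,v}$. Instead, argue for $P^{J,q}$ exactly as for $R^{J,x}$: use Deodhar's formula $P^{J,q}_{e,v}=\sum_{w\in W_J}(-1)^{\ell(w)}P_{w,v}$ together with the upper $P$-invariance that full invariance of lower intervals provides. This is precisely how the paper proceeds (via \Cref{thm:invariant-I}(a),(b)), and \Cref{thm:invariant-I}(c) then gives Marietti's \Cref{conj:parabolic-cic} as the promised corollary.
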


We say a Bruhat interval $[u,v]$ is a \emph{short edge interval} if all edges $y \to y'$ in the Bruhat graph restricted to $[u,v]$ have $\ell(y')-\ell(y)=1$. \Cref{conj:cic} was proven for short edge intervals by Brenti \cite{Brenti-combinatorial}. Note that short edge intervals coincide with the $S_3$-free intervals studied there, by \cite[Proposition 3.3]{Dyer-bruhat-graph}.

\begin{theorem}
\label{thm:short-edge-intervals}
\Cref{conj:cic,conj:parabolic-cic,conj:parabolic-cic-atoms} hold when $[u_1,v_1]$ is a short edge interval.
\end{theorem}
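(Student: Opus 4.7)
The plan is to combine Brenti's theorem \cite{Brenti-combinatorial} with \Cref{thm:conj-are-equivalent}. For a fixed interval $[u_1,v_1]$, any $\varphi$ satisfying the hypothesis of \Cref{conj:parabolic-cic} automatically restricts to a bijection on atoms (since these are the rank-one elements of $[u_1,v_1]^{J_1}$), and specializing $J_1=J_2=\emptyset$ in \Cref{conj:parabolic-cic} recovers \Cref{conj:cic}. So the bulk of the work is to establish \Cref{conj:parabolic-cic-atoms} in the short edge case; the $P^{J,x}$ equalities appearing in \Cref{conj:parabolic-cic}(b) should then follow from the parabolic inversion formulas expressing $P^{J,x}_{u,v}$ in terms of $R^{J,x}_{u',v}$ for $u' \in [u,v]^J$ (which are pairs lying inside $[u_1,v_1]$).

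The key poset-theoretic observation is that being a short edge interval is a property of the underlying poset: in such an interval every Bruhat edge is a cover relation, so the Bruhat graph coincides with the Hasse diagram, which is determined by the poset. Consequently any interval isomorphic to $[u_1,v_1]$ is also short edge, and every subinterval of a short edge interval is short edge.

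The plan is then to revisit the proof of \Cref{thm:conj-are-equivalent} and verify that the reduction from \Cref{conj:parabolic-cic-atoms} to \Cref{conj:cic} is local, in the sense that deducing \Cref{conj:parabolic-cic-atoms}(a) for the specific pair $([u_1,v_1], [u_2,v_2])$ requires knowing \Cref{conj:cic}(a) only for subintervals of $[u_1,v_1]$ and the corresponding subintervals of $[u_2,v_2]$ matched by $\varphi$. Granting such locality, each subinterval in question is short edge by the preceding paragraph, and Brenti's theorem supplies all the ordinary $R$-polynomial equalities needed, completing the argument.

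The main obstacle will be confirming this locality claim. If the proof of \Cref{thm:conj-are-equivalent} routes through constructions in auxiliary Coxeter groups (for instance, embeddings into common ambient groups, or product constructions), one would need to check separately that the auxiliary intervals remain short edge, or to restructure the argument so that the short edge property is preserved throughout. If on the other hand the reduction proceeds by local recursions (such as descent-based identities in the Hecke algebra expanding $R^{J,x}_{u,v}$ in terms of $R$-polynomials of pairs in $[u,v]$), locality is essentially automatic and the proof concludes immediately.
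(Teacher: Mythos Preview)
Your plan is the paper's proof. The ``locality'' you are hoping for is precisely \Cref{thm:invariant-I}: Deodhar's formula (\Cref{thm:deodhar-parabolic-as-sum}) together with \Cref{lem:set-is-combinatorial} expresses $R^{J,x}_{u,v}$ as a signed sum of ordinary $R_{y,v}$ over those $y\in[u,v]$ not lying above any atom in $A^J_{u,v}$, so only upper $R$-invariance of the class of short edge intervals is required, and Brenti's theorem supplies exactly that; part (c) of \Cref{thm:invariant-I} then handles $P^{J,-1}$ under the stronger hypothesis of \Cref{conj:parabolic-cic}, matching your inversion-formula sketch.

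One point to tighten: your argument that ``short edge'' is a poset-theoretic property is circular as written. Knowing that the Bruhat graph of $[u_1,v_1]$ coincides with its Hasse diagram does not by itself preclude $[u_2,v_2]$ from carrying additional long Bruhat edges not visible in \emph{its} Hasse diagram. What is needed is that the Bruhat graph on any interval is determined by the underlying poset (Dyer), from which poset-invariance of the short edge condition follows; the paper absorbs this into the citation of Brenti's result.
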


Suppose that $W=S_n$ is the symmetric group. We call the Bruhat interval $[u,v] \subset W$ \emph{cosimple} if $\{\mathbf{e}_i-\mathbf{e}_j \mid i<j, u \leq v \cdot (ij) \lessdot v\}$ is linearly independent, where the $\mathbf{e}_i$ are the standard basis vectors in $\mathbb{R}^n$. We call $[u,v] \subset S_n$ \emph{coelementary} if it is isomorphic (as a poset) to some cosimple interval in some symmetric group. The coelementary intervals are related by poset duality to the \emph{elementary} intervals studied in \cite{elementary-paper}, but the coelementary convention will be more useful for our purposes here.

\begin{theorem} 
\label{thm:coelementary}
\Cref{conj:cic}(a), \Cref{conj:parabolic-cic}(a), and \Cref{conj:parabolic-cic-atoms}(a) hold when $W_1$ and $W_2$ are symmetric groups and $[u_1,v_1]$ is a coelementary interval.
\end{theorem}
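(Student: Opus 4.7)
The plan is to combine the equivalence \Cref{thm:conj-are-equivalent} with the CIC result for elementary intervals in \cite{elementary-paper}, via a type $A$ duality relating coelementary intervals to elementary ones.

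By \Cref{thm:conj-are-equivalent}, it suffices to prove \Cref{conj:cic}(a) in the coelementary case: for $[u_1,v_1]$ coelementary in a symmetric group and any Bruhat interval $[u_2,v_2]$ poset-isomorphic to $[u_1,v_1]$, to show that $R_{u_1,v_1}=R_{u_2,v_2}$. By the definition of coelementary, we may assume $[u_1,v_1]$ is cosimple in some $S_n$. Consider the anti-automorphism $w\mapsto w_0 w$ of Bruhat order on $S_n$: it is order-reversing and sends $[u_1,v_1]$ to $[w_0v_1,w_0u_1]$, exchanging the coatoms of $[u_1,v_1]$ with the atoms of $[w_0v_1,w_0u_1]$ while preserving the positive roots $\mathbf{e}_i-\mathbf{e}_j$ labelling each cover. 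Thus the cosimple condition on $[u_1,v_1]$ translates into the linear-independence condition defining an elementary interval for $[w_0v_1,w_0u_1]$.

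The plan is then to use this duality together with \cite{elementary-paper} to extract a poset-only formula for $R_{u_1,v_1}$. The most robust route is to dualize the argument of \cite{elementary-paper}: replace atoms by coatoms throughout and use the left-descent recursion for $R$-polynomials in place of the right-descent recursion. This should produce, for any cosimple interval $[u,v]$, a combinatorial formula for $R_{u,v}$ purely in terms of the poset $[u,v]$; applying this formula to both $[u_1,v_1]$ and $[u_2,v_2]$ then yields the required equality.

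The main obstacle is making this dualization explicit: the argument of \cite{elementary-paper} must be symmetric enough to admit a faithful rewriting in terms of coatoms and left descents, and one must check that the cosimple hypothesis interacts with the coatom-based recursion in the same way that the simple hypothesis interacts with the atom-based recursion there. A secondary step is to verify that the reduction from parabolic to ordinary provided by the proof of \Cref{thm:conj-are-equivalent} preserves the property of being coelementary in a symmetric group, so that the dualized argument applies uniformly across the three parabolic conjectures.
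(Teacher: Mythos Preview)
Your reduction step is not valid as written. \Cref{thm:conj-are-equivalent} only asserts that the three conjectures are \emph{globally} equivalent (each holding for all intervals implies the others hold for all intervals); it does not say that establishing \Cref{conj:cic}(a) for the class of coelementary intervals yields \Cref{conj:parabolic-cic}(a) and \Cref{conj:parabolic-cic-atoms}(a) for that same class. The correct tool is \Cref{thm:invariant-I}(a), and to invoke it you must show that the collection $\mathcal{I}$ of coelementary intervals in symmetric groups is \emph{upper $R$-invariant}: for any isomorphism $\varphi:[u_1,v_1]\to[u_2,v_2]$ between coelementary intervals and any $y\in[u_1,v_1]$, one needs $R_{y,v_1}=R_{\varphi(y),v_2}$. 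This requires knowing not just \Cref{conj:cic}(a) for the pair $[u_1,v_1],[u_2,v_2]$ but for every pair of \emph{upper subintervals} $[y,v_1],[\varphi(y),v_2]$. The paper handles this by observing that upper subintervals of coelementary intervals are again coelementary; your proposal omits this, and your ``secondary step'' about the reduction ``preserving the property of being coelementary'' does not capture it.

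Separately, your plan to reprove the result of \cite{elementary-paper} by dualizing its entire argument is unnecessary. Once you have identified the anti-automorphism $w\mapsto w_0 w$ sending cosimple intervals to simple ones, the identity $R_{u,v}=R_{w_0v,\,w_0u}$ (see \cite[Exer.~5.10(b)]{Bjorner-Brenti}) transfers \Cref{conj:cic}(a) from elementary to coelementary intervals directly: if $[u_1,v_1]\cong[u_2,v_2]$ are coelementary, then $[w_0v_1,w_0u_1]\cong[w_0'v_2,w_0'u_2]$ are elementary, so $R_{u_1,v_1}=R_{w_0v_1,w_0u_1}=R_{w_0'v_2,w_0'u_2}=R_{u_2,v_2}$ by the cited result. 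No reopening of the proof in \cite{elementary-paper} is needed.
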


\Cref{sec:prelim} gives background on Bruhat order and Kazhdan--Lusztig polynomials. In \Cref{sec:invariant-collections} we introduce \emph{invariant collections} of Bruhat intervals and prove \Cref{thm:invariant-I} which gives general criteria for the combinatorial invariance of parabolic Kazhdan--Lusztig polynomials. \Cref{thm:conj-are-equivalent,thm:lower-intervals,thm:short-edge-intervals,thm:coelementary} will be shown in \Cref{sec:proofs} to follow from \Cref{thm:invariant-I} and known special cases of \Cref{conj:cic}.

\section{Preliminaries}
\label{sec:prelim}
We refer the reader to \cite{Bjorner-Brenti} for background on Coxeter groups, Bruhat order, and Kazhdan--Lusztig polynomials.

\subsection{Parabolic decompositions and Bruhat order}

Throughout this work, $W$ will denote a Coxeter group with standard generating set $S$ and length function $\ell$, and $J$ a subset of $S$. We write $W_J$ for the parabolic subgroup of $W$ generated by $J$, and $W^J$ for the set of minimum-length representatives of the cosets $W/W_J$. Each element $w \in W$ may be uniquely decomposed as $w=w^Jw_J$ with $w^J \in W^J$ and $w_J \in W_J$. Furthermore, this decomposition satisfies $\ell(w)=\ell(w^J)+\ell(w_J)$.

We denote by $\leq$ the \emph{(strong) Bruhat order}, a partial order on $W$. The following standard fact will be useful (see, e.g. \cite[Prop.~2.5.1]{Bjorner-Brenti}).

\begin{proposition}
\label{prop:leq-on-quotient}
    For $u,v \in W$ and $J \subseteq S$, if $u \leq v$ then $u^J \leq v^J$.
\end{proposition}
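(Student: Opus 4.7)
The plan is to prove this via the subword property of Bruhat order together with the fact that $u^J$ is the Bruhat-minimum element of the coset $uW_J$. First I would fix reduced expressions $v^J = t_1 \cdots t_p$ and $v_J = r_1 \cdots r_q$. Since $\ell(v) = \ell(v^J) + \ell(v_J)$, concatenating gives a reduced expression $v = t_1 \cdots t_p \, r_1 \cdots r_q$.

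Next, since $u \leq v$, the subword property supplies a reduced subword of $t_1 \cdots t_p r_1 \cdots r_q$ whose product is $u$. Split this subword at the interface between the two blocks: let $a$ be the product of the chosen letters among the $t_i$'s, and $b$ the product of those among the $r_j$'s, so $u = ab$. Applying the subword property again to each block separately (using that any subword of a reduced expression is $\leq$ its product), one gets $a \leq v^J$ and $b \leq v_J$. In particular $b \in W_J$, so $uW_J = aW_J$.

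Finally, I would invoke the standard fact (see \cite[Prop.~2.4.4]{Bjorner-Brenti}) that $u^J$ is the Bruhat-minimum of the coset $uW_J$. Since $a$ lies in this coset, $u^J \leq a$, and chaining with $a \leq v^J$ produces the desired inequality $u^J \leq v^J$.

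There is no real obstacle here — the statement is classical and the argument is short. The only subtlety worth stating carefully is the claim that any (not necessarily reduced) subword of a reduced expression for $w$ produces an element that is $\leq w$; this reduces to the usual subword characterization by extracting a reduced sub-subword using the strong exchange property, and I would either cite it or dispatch it in one sentence.
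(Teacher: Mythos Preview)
Your argument is correct: the subword characterization of Bruhat order plus the fact that $u^J$ is Bruhat-minimal in its coset is exactly the standard way to establish this, and you handle the one subtlety (that the intermediate subword for $a$ need not be reduced) appropriately. Note that the paper does not supply its own proof of this proposition at all; it simply records it as a standard fact with a pointer to \cite[Prop.~2.5.1]{Bjorner-Brenti}, so there is nothing further to compare against.
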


We write $[u,v]$ for the closed interval $\{y \in W \mid u \leq y \leq v\}$. The \emph{Bruhat graph} is the directed graph with vertex set $W$ and directed edges $y \to y'$ whenever $y'=yt$ for some reflection $t$ and $\ell(y)<\ell(y')$.
\subsection{Kazhdan--Lusztig polynomials}

\begin{definition}[Kazhdan--Lusztig \cite{Kazhdan-Lusztig-1}; Deodhar \cite{Deodhar-parabolic}]
\label{def:parabolic-R}
Let $W$ be a Coxeter group, let $J \subset S$, and let $x \in \{-1,q\}$. The family of polynomials $\{R_{u,v}^{J,x}\}_{u,v \in W^J}$ is uniquely determined by the following conditions.
\begin{itemize}
    \item[(i)] $R_{u,v}^{J,x} = 0$ if $u \not \leq v$.
    \item[(ii)] $R_{u,u}^{J,x}=1$ for all $u \in W^J$.
    \item[(iii)] For all $s \in S$ with $\ell(sv)<\ell(v)$ we have:
    \begin{align*}
        R_{u,v}^{J,x} &= \begin{cases} R_{su,sv}^{J,x}, &  \text{if $\ell(su)<\ell(u)$} \\
        (q-1)R_{u,sv}^{J,x}+qR_{su,sv}^{J,x}, &  \text{if $\ell(su)>\ell(u)$ and $su \in W^J$} \\
        (q-1-x)R_{u,sv}^{J,x}, &  \text{if $\ell(su)>\ell(u)$ but $su \not \in W^J$.} 
        \end{cases}
    \end{align*}
\end{itemize}
\end{definition} 

\begin{definition}[Kazhdan--Lusztig \cite{Kazhdan-Lusztig-1}; Deodhar \cite{Deodhar-parabolic}]
\label{def:parabolic-P}
Let $W$ be a Coxeter group, let $J \subset S$, and let $x \in \{-1,q\}$. The family of polynomials $\{P_{u,v}^{J,x}\}_{u,v \in W^J}$ is uniquely determined by the following conditions.
\begin{itemize}
    \item[(i)] $P_{u,v}^{J,x} = 0$ if $u \not \leq v$.
    \item[(ii)] $P_{u,u}^{J,x}=1$ for all $u \in W^J$.
    \item[(iii)] $\deg P_{u,v}^{J,x} \leq \frac{1}{2} (\ell(v)-\ell(u)-1)$ if $u<v$.
    \item[(iv)] $q^{\ell(v)-\ell(u)}P^{J,x}_{u,v}(q^{-1}) = \sum_{\sigma \in [u,v]^J} R^{J,x}_{u,\sigma} P^{J,x}_{\sigma,v}$.
\end{itemize}
\end{definition}

When $J=\emptyset$, it is often omitted from the notation for the $R^J$ and $P^J$, and in this case the polynomials are independent of the choice of $x \in \{-1,q\}$; these are the \emph{ordinary} Kazhdan--Lusztig and $R$-polynomials. The following result of Deodhar allows parabolic Kazhdan--Lusztig polynomials to be expressed in terms of ordinary Kazhdan--Lusztig polynomials.

\begin{theorem}\cite[Prop.~2.12 \& Rem.~3.8]{Deodhar-parabolic}
\label{thm:deodhar-parabolic-as-sum}
    For $u,v \in W^J$ we have:
    \begin{itemize}
        \item[(a)] $R^{J,x}_{u,v} = \sum_{w \in W_J} (-x)^{\ell(w)} R_{uw,v}$, for $x \in \{-1,q\}$.
        \item[(b)] $P^{J,q}_{u,v} = \sum_{w \in W_J} (-1)^{\ell(w)} P_{uw,v}$.
    \end{itemize}
\end{theorem}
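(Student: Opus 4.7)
The plan is to prove both parts by verifying that the proposed alternating sums satisfy the recursive characterizations of $R^{J,x}_{u,v}$ and $P^{J,q}_{u,v}$ given in Definitions \ref{def:parabolic-R} and \ref{def:parabolic-P}, then invoking uniqueness.

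For (a), set $\widetilde{R}_{u,v} \coloneqq \sum_{w \in W_J} (-x)^{\ell(w)} R_{uw, v}$ for $u, v \in W^J$. Property (i) follows from \Cref{prop:leq-on-quotient}: if $u \not\leq v$, then for every $w \in W_J$ we have $(uw)^J = u \not\leq v$, so $uw \not\leq v$ and $R_{uw, v} = 0$. Property (ii) holds since $\ell(uw) = \ell(u) + \ell(w) \geq \ell(u)$ forces $R_{uw, u}$ to vanish when $w \neq e$, leaving only $R_{u,u} = 1$.

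The substantive step is the recursion (iii). Fixing $s \in S$ with $\ell(sv) < \ell(v)$, I would expand each summand $R_{uw, v}$ using the classical ordinary $R$-polynomial recursion. The key structural input is the standard parabolic fact: for $u \in W^J$ and $s \in S$ with $\ell(su) > \ell(u)$, either $su \in W^J$, or else $su = ur$ for a unique $r \in J$. In the first two subcases of (iii), the identity on $W_J$ matches terms directly: the classical expansion of $R_{uw, v}$ recombines into $\widetilde{R}_{su, sv}$ or $(q-1)\widetilde{R}_{u, sv} + q\widetilde{R}_{su, sv}$ respectively. In the delicate third case, when $su = ur$ with $r \in J$, the classical recursion produces contributions indexed by $urw = u \cdot (rw)$; I would pair $w$ with $rw$ via the involution $w \leftrightarrow rw$ on $W_J$, which flips the sign $(-x)^{\ell(w)}$, telescoping the $q\widetilde{R}_{\cdot, sv}$ terms together with part of the $(q-1)\widetilde{R}_{\cdot, sv}$ contribution into the single coefficient $(q-1-x)$ on $\widetilde{R}_{u, sv}$. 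This combinatorial bookkeeping is the main obstacle.

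For (b), I would pass through the Hecke algebra $\mathcal{H}$ of $W$. Deodhar's parabolic module $M^{J,q}$ is the induced module $\mathcal{H} \otimes_{\mathcal{H}_J} \mathrm{sgn}_q$, where $\mathrm{sgn}_q : T_s \mapsto -1$ is the sign representation of the parabolic Hecke subalgebra $\mathcal{H}_J$; it has a distinguished basis $\{T_\sigma \otimes 1\}_{\sigma \in W^J}$, and a general basis element $T_\tau \otimes 1$ reduces to $(-1)^{\ell(\tau_J)} T_{\tau^J} \otimes 1$ via the parabolic factorization $\tau = \tau^J \tau_J$. The polynomials $P^{J,q}_{u,v}$ are characterized by the image of the parabolic Kazhdan--Lusztig basis in this module, and applying the projection $\mathcal{H} \to M^{J,q}$ to the ordinary basis element $C'_v \in \mathcal{H}$, then grouping the resulting expansion by cosets $\sigma W_J$ with $\sigma \in W^J$, directly produces the formula $P^{J,q}_{u,v} = \sum_{w \in W_J} (-1)^{\ell(w)} P_{uw, v}$. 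The main technical point here is verifying that this image indeed satisfies the defining properties of the parabolic Kazhdan--Lusztig basis element, which follows from the compatibility of the bar involution on $\mathcal{H}$ with the corresponding involution on $M^{J,q}$.
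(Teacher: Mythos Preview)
The paper does not supply its own proof of this theorem; it is quoted as a result of Deodhar with a citation to \cite[Prop.~2.12 \& Rem.~3.8]{Deodhar-parabolic}, so there is no in-paper argument to compare against. Your sketch is a correct outline and is essentially the route taken in Deodhar's original paper: part (a) is handled by checking the defining recursion for $R^{J,x}$ (the identity $(x-q)(x+1)=0$ is exactly what makes the telescoping in the third case work), and part (b) is obtained by projecting the ordinary Kazhdan--Lusztig basis element to the induced module $\mathcal{H}\otimes_{\mathcal{H}_J}\mathrm{sgn}$ and reading off coefficients coset by coset.
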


\section{Invariant collections of intervals}

We now define \emph{invariant collections} of Bruhat intervals. For such collections of intervals we will be able to transfer information about the combinatorial invariance of ordinary $P$- and $R$-polynomials to the parabolic setting (see \Cref{thm:invariant-I}).

\label{sec:invariant-collections}
\begin{definition}
Let $\mathcal{I}$ be a collection of Bruhat intervals in Coxeter groups. We say that $\mathcal{I}$ is:
\begin{itemize}
    \item \emph{upper $R$-invariant} (resp. \emph{upper $P$-invariant}) if for all $[u_1,v_1], [u_2,v_2] \in \mathcal{I}$ and for all poset isomorphisms $\varphi: [u_1,v_1] \to [u_2,v_2]$ we have $R_{y,v_1}=R_{\varphi(y),v_2}$ (resp. $P_{y,v_1}=P_{\varphi(y),v_2}$) for all $y \in [u_1,v_1]$.
    \item \emph{fully invariant} if for all $[u_1,v_1], [u_2,v_2] \in \mathcal{I}$ and for all poset isomorphisms $\varphi: [u_1,v_1] \to [u_2,v_2]$ we have $R_{y,y'}=R_{\varphi(y),\varphi(y')}$ for all $y,y' \in [u_1,v_1]$.
\end{itemize} 
\end{definition}

The following fact is immediate from \Cref{def:parabolic-P} (taking $J=\emptyset$ in the definition so that $W^J=W$).

\begin{proposition}
\label{prop:fully-R-implies-fully-P}
If a collection $\mathcal{I}$ of Bruhat intervals is fully invariant, then for all $[u_1,v_1], [u_2,v_2] \in \mathcal{I}$ and for all poset isomorphisms $\varphi: [u_1,v_1] \to [u_2,v_2]$ we have $P_{y,y'}=P_{\varphi(y),\varphi(y')}$ for all $y,y' \in [u_1,v_1]$.
\end{proposition}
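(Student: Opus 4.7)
The plan is to apply the recursive characterization of $P$-polynomials in \Cref{def:parabolic-P} with $J=\emptyset$ (in which case $W^J=W$, $[u,v]^J=[u,v]$, and the parameter $x$ drops out), using induction on the rank of the interval. First I would note that any poset isomorphism $\varphi:[u_1,v_1]\to[u_2,v_2]$ restricts to an isomorphism of every subinterval $[y,y']$ onto $[\varphi(y),\varphi(y')]$. Since Bruhat intervals are graded by $\ell$, $\varphi$ also preserves rank differences, so $d := \ell(y')-\ell(y) = \ell(\varphi(y'))-\ell(\varphi(y))$.

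The induction is on $d$. The base case $d=0$ is immediate: $P_{y,y}=1=P_{\varphi(y),\varphi(y)}$ by (ii) of \Cref{def:parabolic-P}. For the inductive step, rewrite condition (iv) of \Cref{def:parabolic-P} (with $J=\emptyset$) by separating out the $\sigma=y$ term as
\[
q^d P_{y,y'}(q^{-1}) - P_{y,y'}(q) = \sum_{\sigma:\, y<\sigma\leq y'} R_{y,\sigma}\, P_{\sigma,y'}.
\]
Full invariance of $\mathcal{I}$ gives $R_{y,\sigma}=R_{\varphi(y),\varphi(\sigma)}$, and the inductive hypothesis applied to the pairs $(\sigma,y')$, which have strictly smaller rank difference, gives $P_{\sigma,y'}=P_{\varphi(\sigma),\varphi(y')}$. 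Hence the right-hand side coincides for $[y,y']$ and $[\varphi(y),\varphi(y')]$.

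To finish, I would invoke the standard uniqueness observation: if $f\in\mathbb{Z}[q]$ satisfies $\deg f\leq \lfloor (d-1)/2\rfloor$ (condition (iii) of \Cref{def:parabolic-P}) and $q^d f(q^{-1})-f(q)=g(q)$ for a given $g$, then $f$ is determined by $g$, because the coefficients of $f$ of degree $\leq\lfloor (d-1)/2\rfloor$ come from the low-degree part of $-g$ while the higher-degree contributions come from $q^d f(q^{-1})$, with no overlap. Applying this to $P_{y,y'}$ and $P_{\varphi(y),\varphi(y')}$, which both satisfy the equation with the same right-hand side and the same degree bound, concludes the induction.

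There is no real obstacle here; the proposition is essentially a bookkeeping exercise once \Cref{def:parabolic-P} is specialized to $J=\emptyset$. The only things to verify carefully are that subintervals of $[u_1,v_1]$ lie in the domain where the full-invariance hypothesis gives equality of $R$-polynomials (true by definition, since the hypothesis quantifies over all $y,y'\in[u_1,v_1]$) and that rank differences are preserved by $\varphi$ (which follows from the gradedness of Bruhat order).
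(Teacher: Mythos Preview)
Your proposal is correct and is precisely the argument the paper has in mind: the paper merely notes that the proposition ``is immediate from \Cref{def:parabolic-P} (taking $J=\emptyset$ in the definition so that $W^J=W$),'' and your induction on $d$ using condition (iv) together with the degree bound (iii) is the standard unpacking of that remark.
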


\Cref{prop:fully-R-implies-fully-P} implies that fully invariant collections are in particular upper $R$-invariant and upper $P$-invariant.

\Cref{thm:invariant-I} below is our most general result. \Cref{thm:conj-are-equivalent,thm:lower-intervals,thm:short-edge-intervals,thm:coelementary} will all be shown to follow from it.

\begin{theorem}
\label{thm:invariant-I}
Let $\mathcal{I}$ be a collection of Bruhat intervals in Coxeter groups. Let $[u_1,v_1], [u_2,v_2] \in \mathcal{I}$ with $u_1,v_1 \in W_1^{J_1}$ and $u_2,v_2 \in W_2^{J_2}$ and let $\varphi: [u_1,v_1] \to [u_2,v_2]$ be a poset isomorphism restricting to a bijection $A^{J_1}_{u_1,v_1} \to A^{J_2}_{u_2,v_2}$.
\begin{itemize}
    \item[(a)] If $\mathcal{I}$ is upper $R$-invariant, then $R^{J_1,x}_{u_1,v_1}=R^{J_2,x}_{u_2,v_2}$ for $x \in \{-1,q\}$.
    \item[(b)] If $\mathcal{I}$ is upper $P$-invariant, then $P^{J_1,q}_{u_1,v_1}=P^{J_2,q}_{u_2,v_2}$.
\end{itemize}
If moreover $\varphi$ restricts to an isomorphism $[u_1,v_1]^{J_1} \to [u_2,v_2]^{J_2}$, then:
\begin{itemize} 
    \item[(c)] If $\mathcal{I}$ is fully invariant, we have  $P^{J_1,x}_{u_1,v_1}=P^{J_2,x}_{u_2,v_2}$ and $R^{J_1,x}_{u_1,v_1}=R^{J_2,x}_{u_2,v_2}$ for $x \in \{-1,q\}$.
\end{itemize}
\end{theorem}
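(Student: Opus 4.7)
The plan is to reduce (a) and (b) to Deodhar's expressions in \Cref{thm:deodhar-parabolic-as-sum}, and to prove (c) by induction on $\ell(v_1)-\ell(u_1)$ using the recursion in \Cref{def:parabolic-P}(iv). For (a), expanding
\[
R^{J_i,x}_{u_i,v_i} = \sum_{w \in W_{J_i}} (-x)^{\ell(w)} R_{u_i w,\, v_i}
\]
and using that $u_i \in W^{J_i}$ forces $u_i \le u_i w$ automatically (via the subword property applied to any reduced expression), the nonzero terms are indexed by the coset slice $C_i := [u_i,v_i] \cap u_i W_{J_i}$ via the length-preserving bijection $w \mapsto u_i w$, since $\ell(u_i w) = \ell(u_i) + \ell(w)$. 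The crux is to show that $\varphi$ sends $C_1$ bijectively onto $C_2$.

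I would establish the order-theoretic characterization: $y \in C_i$ if and only if no $a \in A^{J_i}_{u_i,v_i}$ satisfies $a \le y$. For the easy direction, \Cref{prop:leq-on-quotient} gives that $y \in C_i$ implies $y^{J_i} = u_i$, so any such $a \le y$ would satisfy $a = a^{J_i} \le y^{J_i} = u_i$, contradicting $u_i \lessdot a$. For the converse, if $y \notin C_i$ then $u_i < y^{J_i}$ in $W^{J_i}$, and since $W^{J_i}$ is graded by $\ell$ (Bj\"orner--Brenti), every cover in $W^{J_i}$-Bruhat is a cover in $W$-Bruhat; the first step of a saturated chain from $u_i$ to $y^{J_i}$ inside $W^{J_i}$ is thus an atom $a \in A^{J_i}_{u_i,v_i}$ with $a \le y^{J_i} \le y$. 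Because this characterization uses only the poset structure and the bijection $A^{J_1}_{u_1,v_1} \to A^{J_2}_{u_2,v_2}$ given by $\varphi$, we obtain $\varphi(C_1) = C_2$. Combining this with rank-preservation of $\varphi$ and upper $R$-invariance yields (a) by matching the two Deodhar sums term-by-term; (b) is identical using \Cref{thm:deodhar-parabolic-as-sum}(b) and upper $P$-invariance.

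For (c), I would induct on $\ell(v_1) - \ell(u_1)$, rearranging \Cref{def:parabolic-P}(iv) as
\[
q^{\ell(v_1)-\ell(u_1)} P^{J_1,x}_{u_1,v_1}(q^{-1}) - P^{J_1,x}_{u_1,v_1} = \sum_{\sigma \in (u_1, v_1]^{J_1}} R^{J_1,x}_{u_1,\sigma}\, P^{J_1,x}_{\sigma,v_1},
\]
which together with the degree bound \Cref{def:parabolic-P}(iii) uniquely determines $P^{J_1,x}_{u_1,v_1}$. For each $\sigma \in (u_1,v_1]^{J_1}$ with $\sigma' := \varphi(\sigma)$, the restriction $\varphi|_{[u_1,\sigma]}$ is a poset isomorphism restricting to a bijection on atoms in $W^{J_i}$ (these being exactly the atoms of $[u_i,v_i]$ in $W^{J_i}$ lying below $\sigma^{(\prime)}$), and full invariance of $\mathcal{I}$ supplies the datum $R_{y,\sigma} = R_{\varphi(y),\sigma'}$ for $y \in [u_1,\sigma]$; the argument of part (a) then gives $R^{J_1,x}_{u_1,\sigma} = R^{J_2,x}_{u_2,\sigma'}$. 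Similarly $\varphi|_{[\sigma,v_1]}$ restricts to an iso $[\sigma,v_1]^{J_1} \to [\sigma',v_2]^{J_2}$ and inherits the full invariance data, so the induction hypothesis yields $P^{J_1,x}_{\sigma,v_1} = P^{J_2,x}_{\sigma',v_2}$ for $x \in \{-1,q\}$. The two sums thus agree, and uniqueness forces the $P$-equality in (c); the $R$-equality is just part (a) applied to the full interval. The main obstacle is the order-theoretic characterization of $C_i$, which rests on the atom-lifting property for parabolic Bruhat order; once this is in place, the rest is bookkeeping with Deodhar's sum formula and the Kazhdan--Lusztig recursion.
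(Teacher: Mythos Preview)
Your proposal is correct and follows essentially the same approach as the paper: the order-theoretic characterization of $C_i = [u_i,v_i]\cap u_i W_{J_i}$ via the atoms $A^{J_i}_{u_i,v_i}$ is exactly the paper's key \Cref{lem:set-is-combinatorial}, parts (a) and (b) are deduced from it term-by-term via Deodhar's sums just as in the paper, and part (c) is the same induction using \Cref{def:parabolic-P}(iv) after first securing all the needed parabolic $R$-equalities from the argument of (a) together with full invariance. The only cosmetic difference is that the paper establishes $R^{J_1,x}_{u'_1,v'_1}=R^{J_2,x}_{u'_2,v'_2}$ for all $u'_1,v'_1\in[u_1,v_1]^{J_1}$ up front and then runs a single downward induction on $P$, whereas you phrase the induction on the full statement of (c) and note that the restricted maps ``inherit the full invariance data''; both organizations are valid.
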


We first prove a key lemma. We remark that this lemma is also a special case of \cite[Proposition 3.4]{SentinelliProjection}, using the fact that $w\mapsto w^J$ is a projection in the sense described there \cite{MariettiCosets}.

\begin{lemma}
    \label{lem:set-is-combinatorial}
    Let $u,v \in W^J$ with $u \leq v$. Then
    \begin{equation}
    \label{eq:set-is-combinatorial}
            uW_J \cap [e,v] = \{y \in [u,v] \mid y \not \geq a \text{ for all } a \in A^J_{u,v} \},
    \end{equation}
    where $e$ is the identity element of $W$.
\end{lemma}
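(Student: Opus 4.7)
The plan is to prove the equality by double containment. For the forward inclusion, suppose $y \in uW_J \cap [e,v]$, so $y = uw$ for some $w \in W_J$. Since $u \in W^J$, the length-additivity of the parabolic decomposition gives $y^J = u$, while the fact that concatenating reduced words for $u$ and $w$ produces a reduced word for $y$ gives $y \geq u$; combined with $y \leq v$, this yields $y \in [u,v]$. For any $a \in A^J_{u,v}$, if $y \geq a$ then \Cref{prop:leq-on-quotient} applied to $a \in W^J$ would yield $a = a^J \leq y^J = u$, contradicting $u \lessdot a$. Hence $y \not\geq a$ for all $a \in A^J_{u,v}$.

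For the reverse inclusion I would argue the contrapositive: given $y \in [u,v]$ with $y \notin uW_J$, produce some $a \in A^J_{u,v}$ with $a \leq y$. The hypothesis is equivalent to $y^J \neq u$, while \Cref{prop:leq-on-quotient} applied to $u \leq y \leq v$ gives $u \leq y^J \leq v$, so in fact $u < y^J$. Since $y \geq y^J$, it suffices to find $a \in W^J$ with $u \lessdot_W a \leq y^J$; any such $a$ automatically lies in $A^J_{u,v}$ and satisfies $a \leq y$.

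The crux is this last production step, and it is the main obstacle. The natural choice is a minimal element $a$ of $[u, y^J] \cap W^J$ strictly above $u$, i.e., a cover $u \lessdot_{W^J} a$ in the induced Bruhat subposet on $W^J$. A priori such a $W^J$-cover need not be a cover in the full Bruhat order of $W$, since non-$W^J$ elements could lie between $u$ and $a$. This is resolved by the standard fact that the induced Bruhat order on $W^J$ is graded with rank function $\ell$ (see, e.g., \cite[Cor.~2.5.2]{Bjorner-Brenti}). Gradedness forces $\ell(a) = \ell(u) + 1$, and because Bruhat order on $W$ is itself graded by length, $a$ then covers $u$ in $W$ as well, finishing the argument.
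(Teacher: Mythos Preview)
Your proof is correct and follows essentially the same approach as the paper: both directions match the paper's argument, with the reverse inclusion hinging on the gradedness of $W^J$ (equivalently $[u,v]^J$) by length to ensure that a $W^J$-cover of $u$ below $y^J$ is in fact a cover in $W$. The only cosmetic difference is that you phrase the reverse inclusion contrapositively, and the paper cites \cite[Cor.~2.5.6]{Bjorner-Brenti} rather than~2.5.2 for the gradedness fact.
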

\begin{proof}
Suppose $y = uw \in uW_J \cap [e,v]$. Since $u \in W^J$, we have $u \leq uw$, so $y \in [u,v]$. If we had $y \geq a$ for some $a \in A^J_{u,v}$, then we would have $y^J \geq a^J=a > u$ by \Cref{prop:leq-on-quotient}, but this contradicts the uniqueness of the parabolic decomposition $y=uw$. Thus $y$ belongs to the right hand side of (\ref{eq:set-is-combinatorial}).

Suppose now that $y \in [u,v]$ satisfies $y \not \geq a$ for all $a \in A^J_{u,v}$; we must show that $y \in uW_J$. By \Cref{prop:leq-on-quotient}, we have $y^J \in [u,v]^J$. Since $[u,v]^J$ is graded by length \cite[Cor.~2.5.6]{Bjorner-Brenti} and has unique minimal element $u$, if $y^J > u$ then $y^J \geq a$ for some $a \in A^J_{u,v}$. But this cannot be the case, since we would have $a \not \leq y \geq y^J \geq a$. Thus we must have $y^J=u$, and so $y=uy_J \in uW_J$.
\end{proof}

\begin{proof}[Proof of \Cref{thm:invariant-I}]
Let $[u_1,v_1], [u_2,v_2] \in \mathcal{I}$ with $u_1,v_1 \in W_1^{J_1}$ and $u_2,v_2 \in W_2^{J_2}$ and let $\varphi: [u_1,v_1] \to [u_2,v_2]$ be a poset isomorphism restricting to a bijection $A^{J_1}_{u_1,v_1}\to A^{J_2}_{u_2,v_2}$. The key observation is that the right-hand side of (\ref{eq:set-is-combinatorial}) from \Cref{lem:set-is-combinatorial} is preserved by $\varphi$ (a fact which is not clear a priori for the left-hand side). Indeed, we have
\begin{align}
    \nonumber
    &\phantom{{}=}\varphi\left(\{y_1 \in [u_1,v_1] \mid \forall a \in A^{J_1}_{u_1,v_1},~y_1 \not \geq a \}\right) \\ 
    \nonumber
    &= \{\varphi(y_1) \in [u_2,v_2] \mid \forall a \in \varphi(A^{J_1}_{u_1,v_1}),~\varphi(y_1) \not \geq a \} \\
    &= \{y_2 \in [u_2,v_2] \mid \forall a \in A^{J_2}_{u_2,v_2},~ y_2 \not \geq a \}.
    \label{eq:phi-applied-to-set}
\end{align}
Where in the first equality we have used that $\varphi$ is a poset isomorphism and in the second equality we have used the fact that $\varphi$ sends $A^{J_1}_{u_1,v_1}$ to $A^{J_2}_{u_2,v_2}$.

\begin{itemize}
    \item[(a)] Suppose that $\mathcal{I}$ is upper $R$-invariant. By \Cref{thm:deodhar-parabolic-as-sum}(a) we have
    \[
    R^{J_1,x}_{u_1,v_1} = \sum_{w \in W_{J_1}} (-x)^{\ell(w)} R_{u_1w,v_1}.
    \]
    Since $R_{u_1w,v_1} =0$ unless $u_1w \leq v_1$, this sum is the same as
    \[
    \sum_{y_1 \in u_1W_{J_1} \cap [e,v_1]} (-x)^{\ell(y_1)-\ell(u_1)} R_{y_1,v_1}=\sum_{\substack{y_1 \in [u_1,v_1] \\ \forall a \in A^{J_1}_{u_1,v_1},~y_1 \not \geq a}} (-x)^{\ell(y_1)-\ell(u_1)} R_{y_1,v_1}.
    \]
    We have $R_{y_1,v_1}=R_{\varphi(y_1),v_2}$ by upper $R$-invariance and hence by (\ref{eq:phi-applied-to-set}) we conclude
    \[
    R^{J_1,x}_{u_1,v_1} = \sum_{\substack{y_2 \in [u_2,v_2] \\ \forall a \in A^{J_2}_{u_2,v_2},~y_2 \not \geq a}} (-x)^{\ell(y_2)-\ell(u_2)} R_{y_2,v_2} = R^{J_2,x}_{u_2,v_2}.
    \]
    \item[(b)] If instead $\mathcal{I}$ is upper $P$-invariant, then we can apply \Cref{thm:deodhar-parabolic-as-sum}(b) and argue as above to conclude that $P^{J_1,q}_{u_1,v_1}=P^{J_2,q}_{u_2,v_2}$.
    \item[(c)] Suppose now that $\varphi$ restricts to an isomorphism $[u_1,v_1]^{J_1} \to [u_2,v_2]^{J_2}$ and that $\mathcal{I}$ is fully invariant. For any $u_1', v_1' \in [u_1,v_1]^{J_1}$ the restriction $\varphi|_{[u_1',v_1']}$ is a poset isomorphism onto its image $[u_2',v_2']$, where $u_2'\coloneqq \varphi(u_1'), v_2'\coloneqq \varphi(v_1')$. Furthermore, it sends $A^{J_1}_{u_1',v_1'}$ to $A^{J_2}_{u_2',v_2'}$. Thus, by the arguments in part (a), we have that
    \begin{equation}
    \label{eq:all-R-equal}
        R^{J_1,x}_{u_1',v_1'}=R^{J_2,x}_{u_2',v_2'}.
    \end{equation}
    If $u_1=v_1$ then $u_2=v_2$ and we have $P^{J_1,x}_{u_1,v_1}=P^{J_2,x}_{u_2,v_2}=1$. If $u_1 < v_1$ then
    \begin{align}
    \nonumber q^{\ell(v_1)-\ell(u_1)}P^{J_1,x}_{u_1,v_1}(q^{-1})-P^{J_1,x}_{u_1,v_1}(q) &= \sum_{\substack{\sigma_1 \in W_1^{J_1} \\ u_1 < \sigma_1 \leq v_1}} R^{J_1,x}_{u_1,\sigma_1}(q) P^{J_1,x}_{\sigma_1,v_1}(q) \\ \nonumber
    &=\sum_{\substack{\sigma_2 \in W_2^{J_2} \\ u_2 < \sigma_2 \leq v_2}} R^{J_2,x}_{u_2,\sigma_2}(q) P^{J_2,x}_{\sigma_2,v_2}(q) \\ \label{eq:P-plus-reverse}
    &=q^{\ell(v_2)-\ell(u_2)}P^{J_2,x}_{u_2,v_2}(q^{-1})-P^{J_2,x}_{u_2,v_2}(q).
    \end{align}
    Here we have used (\ref{eq:all-R-equal}) and have assumed by induction on the height of the intervals that 
    $P^{J_1,x}_{\sigma_1,v_1}=P^{J_2,x}_{\varphi(\sigma_1),v_2}$ for $u_1 < \sigma_1 \leq v_1$.  
    The fact that $\varphi$ is a poset isomorphism implies that $\ell(v_1)-\ell(u_1)=\ell(v_2)-\ell(u_2)$. Together with the degree bound from \Cref{def:parabolic-P}(iii), equation (\ref{eq:P-plus-reverse}) then implies that $P^{J_1,x}_{u_1,v_1}=P^{J_2,x}_{u_2,v_2}$. 
\end{itemize}
\end{proof}

\section{Applications of Theorem~\texorpdfstring{\ref{thm:invariant-I}}{\ref*{thm:invariant-I}}}
\label{sec:proofs}

The proofs of the remaining theorems follow from \Cref{thm:invariant-I}.

\begin{proof}[Proof of \Cref{thm:conj-are-equivalent}]
    It was noted in the introduction that \Cref{conj:parabolic-cic-atoms}(a) implies \Cref{conj:parabolic-cic} which in turn implies \Cref{conj:cic}. Thus it suffices to show that \Cref{conj:cic} implies \Cref{conj:parabolic-cic-atoms}.

    Suppose that \Cref{conj:cic} holds. This implies that for all $W_1$ and $W_2$ the collection $\mathcal{I}$ of all Bruhat intervals in $W_1$ and $W_2$ is fully invariant and therefore is, in particular, upper $R$-invariant and upper $P$-invariant. 
    By \Cref{thm:invariant-I}(a) and (b), we have \Cref{conj:parabolic-cic-atoms} (a) and (b) respectively. 
\end{proof}

\begin{proof}[Proof of \Cref{thm:lower-intervals}]
    It follows from \cite[Thm.~7.8]{Lower-intervals-general-type} that the collection of lower intervals in $W_1$ and $W_2$ is fully invariant. Applying \Cref{thm:invariant-I}(a) and (b) proves \Cref{conj:parabolic-cic-atoms} (a) and (b) for lower intervals, and applying \Cref{thm:invariant-I}(c) yields this case of \Cref{conj:parabolic-cic}.
\end{proof}

\begin{proof}[Proof of \Cref{thm:short-edge-intervals}]
    It follows from \cite[Thm.~6.3]{Brenti-combinatorial} that the collection $\mathcal{I}$ of short edge intervals in $W_1$ and $W_2$ is fully invariant. Applying \Cref{thm:invariant-I}(a) and (b) proves \Cref{conj:parabolic-cic-atoms} (a) and (b) for intervals from $\mathcal{I}$, and applying \Cref{thm:invariant-I}(c) yields this case of \Cref{conj:parabolic-cic}.
\end{proof}

\begin{proof}[Proof of \Cref{thm:coelementary}]
    It was shown in \cite[Thm.~1.6]{elementary-paper} that \Cref{conj:cic}(a) holds when $W_1$ and $W_2$ are symmetric groups and $[u_1,v_1]$ and $[u_2,v_2]$ are elementary intervals. Multiplication by the longest element $w_0$ of $S_n$ induces an antiautomorphism of Bruhat order and sends elementary intervals to coelementary intervals. Since we also have $R_{u,v}=R_{w_0v,w_0u}$ for all $u \leq v \in S_n$ \cite[Exer.~5.10(b)]{Bjorner-Brenti}, 
    the result of \cite{elementary-paper} also implies that \Cref{conj:cic}(a) holds for coelementary intervals in symmetric groups. Upper subintervals $[y,v]$ of coelementary intervals $[u,v]$ are easily seen to be coelementary themselves. Thus the collection $\mathcal{I}$ of coelementary intervals in symmetric groups is upper $R$-invariant. Applying \Cref{thm:invariant-I}(a) yields \Cref{conj:parabolic-cic}(a) and \Cref{conj:parabolic-cic-atoms}(a).
\end{proof}

\section*{Acknowledgements}

We are grateful to Mario Marietti and Paolo Sentinelli for interesting discussions during the Workshop ``Bruhat order: recent developments and open problems" held at the University of Bologna.

\bibliographystyle{plain}
\bibliography{arxiv-v3}
\end{document}